
\documentclass[11pt, a4paper]{amsart}

\usepackage{amsfonts,amsmath,amssymb, amscd}

\newtheorem{theorem}{Theorem}[section]

\newtheorem{definition}[theorem]{Definition}
\newtheorem{prop}[theorem]{Proposition}

\theoremstyle{definition}
\newtheorem{rem}[theorem]{Remark}

\newtheorem{exas}[theorem]{Examples}

\numberwithin{equation}{section}

\newcommand\eps{\varepsilon}

\newcommand\CC{\mathbb{C}}
\newcommand\ZZ{\mathbb Z}

\DeclareMathOperator{\Hom}{Hom}
\DeclareMathOperator{\id}{id}
\DeclareMathOperator{\Id}{Id}
\DeclareMathOperator{\sgn}{sign}

\title[Polynomial identities distinguishing Galois objects]
{Examples of polynomial identities\\ 
distinguishing the Galois objects over\\ 
finite-dimensional Hopf algebras}

\author[Christian Kassel]
{Christian Kassel}
\address{Christian Kassel: 
Institut de Recherche Math\'e\-ma\-tique Avanc\'ee,
CNRS \& Universit\'e de Strasbourg,
7 rue Ren\'{e} Descartes, 67084 Strasbourg, France}
\email{kassel@math.unistra.fr}
\urladdr{www-irma.u-strasbg.fr/\raise-2pt\hbox{\~{}}kassel/}

\keywords{Hopf algebra, comodule algebra, polynomial identity}

\subjclass[2010]{16R50, 16T05, 16T15}


\begin{document}

\begin{abstract}
We define polynomial $H$-identities for comodule algebras over a Hopf algebra~$H$
and establish general properties for the corresponding $T$-ideals.
In the case~$H$ is a Taft algebra or the Hopf algebra~$E(n)$,
we exhibit a finite set of polynomial $H$-identities which distinguish the Galois objects over~$H$
up to isomorphism.
\end{abstract}

\maketitle

\section{Introduction}

By the celebrated Amitsur-Levitzki theorem\,\cite{AL}, the standard polynomial of degree~$2n$ 
\begin{equation*}
S_{2n} = \sum_{\sigma \in S_n} \, \sgn(\sigma) \, X_{\sigma(1)} X_{\sigma(2)} \cdots X_{\sigma(2n)}
\end{equation*}
is a polynomial identity for the algebra~$M_n(\CC)$ of $n\times n$-matrices
with complex entries,
and $M_n(\CC)$ has no non-zero polynomial identity of degree~$< 2n$.
It follows that the identities~$S_{2n}$ distinguish the
finite-dimensional simple associative algebras over~$\CC$ up to isomorphism. 

When $G$ is an abelian group, Koshlukov and Zaicev\,\cite{KZ} established that 
any finite-dimensional $G$-graded $G$-simple associative algebra 
over an algebraically closed field of characteristic zero
is determined up to $G$-graded isomorphism by its $G$-graded polynomial identities.
Aljadeff and Haile\,\cite{AH} extended their result to non-abelian groups.
Similar results exist for other classes of algebras.

Let now $H$ be a Hopf algebra over a field~$k$. Consider the class of $H$-comodule algebras.
This class contains the $G$-graded $k$-algebras; indeed, such a algebra is nothing but
a comodule algebra over the group algebra~$kG$ equipped with its standard Hopf algebra structure. 
Similarly, a comodule algebra over the Hopf algebra
of $k$-valued functions on a finite group~$G$ is the same as a $G$-algebra, i.e., an associative $k$-algebra
equipped with a left $G$-action by algebra automorphisms.

In this context we may wonder whether the following assertion holds: 
if $H$ is a Hopf algebra over an algebraically closed field of characteristic zero,
then any finite-dimensional simple $H$-comodule algebra is determined 
up to $H$-comodule algebra isomorphism by its polynomial $H$-identities.

In this note we provide evidence in support of this assertion by means of examples.
When $H$ is the $n^2$-dimensional Taft algebra~$H_{n^2}$ or 
the $2^{n+1}$-dimensional Hopf algebra~$E(n)$, we exhibit (finitely many) polynomial $H$-identities
that distinguish the $H$-Galois objects over an algebraically closed field.
Denoting the $T$-ideal of polynomial $H$-identities for a comodule algebra~$A$ by~$\Id_H(A)$,
we deduce that $\Id_H(A) = \Id_H(A')$ implies that $A$ and $A'$ are isomorphic Galois objects.
Since each of our finite sets of identities determines the Galois object~$A$ up to isomorphism, it
also determine the $T$-ideal~$\Id_H(A)$ completely;
in a sense which we shall not make precise, these identities generate the $T$-ideal.

Before giving the explicit identities, we have to 
define the concept of a polynomial $H$-identity for a comodule algebra~$A$
over a Hopf algebra~$H$; this is done in full generality in\,\S\,\ref{sec-PI}.
When $A$~is obtained from~$H$ by twisting its product with the help of a two-cocycle,
we produce in\,\S\,\ref{ssec-detect} a universal map detecting all polynomial $H$-identities for~$A$, i.e., 
a map whose kernel is exactly the $T$-ideal~$\Id_H(A)$. 

In \S\,\ref{sec-Taft} we deal with the Taft algebra~$H_{n^2}$.
After recalling the classification of its Galois objects, we show that the degree~$2n$ polynomial 
\begin{equation*}
(YX - qXY)^n - (1-q)^n X^nY^n + (1-q)^n c \, E^nX^n
\end{equation*}
is a polynomial $H_{n^2}$-identity  and that
it distinguishes the isomorphism classes of the Galois objects over an algebraically closed field. 
In~\S\,\ref{monomial} we extend this to certain monomial Hopf algebras.

We prove a similar result for the Hopf algebra~$E(n)$ in \S\,\ref{sec-En}, 
exhibiting a finite set of polynomial $E(n)$-identities which distinguishes the Galois objects over~$E(n)$.

\section{Polynomial identities for comodule algebras}\label{sec-PI}

This is a general section in which we define polynomial identities for comodule algebras
and state general properties of the corresponding $T$-ideals.

We fix a ground field~$k$ over which all our constructions will be defined. 
In particular, all linear maps are supposed to be $k$-linear
and unadorned tensor product symbols~$\otimes$ mean tensor products over~$k$.
Throughout the paper we assume that~$k$ is \emph{infinite}.

\subsection{Reminder on comodule algebras}\label{ssec-prel}

We suppose the reader familiar with the language of Hopf algebra, as presented for instance in~\cite{M2, Sw}.
As is customary, we denote the coproduct of a Hopf algebra by~$\Delta$, its counit by~$\eps$,
and its antipode by~$S$.
We also make use of a Heyneman-Sweedler-type notation 
for the image 
\[
\Delta(x) = x_1 \otimes x_2
\]
of an element~$x$ of a Hopf algebra~$H$ under its coproduct.

Recall that a (right) $H$-\emph{comodule algebra} over a Hopf $k$-algebra~$H$
is an associative unital $k$-algebra~$A$ 
equipped with a right $H$-comodule structure whose (coassociative, counital) \emph{coaction}
\[
\delta : A \to A \otimes H
\] 
is an algebra map.
The subalgebra~$A^H$ of \emph{coin\-var\-iants} of an $H$-comodule algebra~$A$
is defined by
\begin{equation*}
A^H = \{ a \in A \, | \, \delta(a)  = a \otimes 1\} \, .
\end{equation*}

A \emph{Galois object} over~$H$ is an $H$-comodule algebra~$A$ such that $A^H = k\, 1_A$
and the map $\beta: A \otimes A \to A \otimes H$ given by $a\otimes a' \mapsto (a\otimes 1) \, \delta(a')$
($a,a'\in A$) is a linear isomorphism. For more on Galois objects, see\,\cite[Chap.~8]{M2}.

Let us now concentrate on a special class of Galois objects,
which we call \emph{twisted comodule algebras}.
Recall that a \emph{two-cocycle}~$\alpha$ on~$H$ is 
a bilinear form $\alpha : H \times H \to k$ satisfying the cocycle condition
\begin{equation*}\label{cocycle}
\alpha(x_1,y_1)\, \alpha(x_2 y_2, z)
= \alpha(y_1, z_1)\, \alpha(x, y_2 z_2)
\end{equation*}
for all $x,y,z \in H$.
We assume that $\alpha$ is invertible (with respect to the convolution product)
and normalized; the latter means that 
$\alpha(x,1)  = \alpha(1,x) = \varepsilon(x)$ for all $x\in H$.

Let $u_H$ be a copy of the underlying vector space of~$H$.
Denote the identity map~$u$ from $H$ to~$u_H$ by $x \mapsto u_x$ ($x\in H$).
We define the {algebra} ${}^{\alpha} H$ as the vector space~$u_H$ equipped 
with the product given by
\begin{equation*}\label{twisted-multiplication}
u_x \,   u_y = \alpha(x_1, y_1) \, u_{x_2 y_2}
\end{equation*}
for all $x$, $y \in H$.
This product is associative thanks to the cocycle condition;
the two-cocycle~$\alpha$ being normalized,
$u_1$ is the unit of~${}^{\alpha} H$.

The algebra ${}^{\alpha} H$ is an $H$-comodule algebra
with coaction 
$\delta \colon {}^{\alpha} H \to {}^{\alpha} H \otimes H$
given for all $x\in H$ by
\begin{equation*}\label{twisted-coaction}
\delta (u_x) = u_{x_1} \otimes x_2 \, .
\end{equation*}
It is easy to check that the subalgebra of coinvariants of~${}^{\alpha} H$ coincides with~$k \, u_1$ 
and that the map $\beta: {}^{\alpha} H \otimes {}^{\alpha} H \to {}^{\alpha} H \otimes H$ is bijective,
turning ${}^{\alpha} H$ into a Galois object over~$H$.
Conversely, when $H$ is finite-dimensional, 
any Galois object over~$H$ is isomorphic to a comodule algebra of the form~${}^{\alpha} H$.

\subsection{Polynomial $H$-identities}\label{ssec-PI}

Let us now define the notion of a polynomial $H$-identity for an $H$-comodule algebra~$A$.
(Polynomial identities for module algebras over a Hopf algebra have been defined e.g.\ in\,\cite{BL, Be}.)

For each $i = 1, 2,\ldots $ consider a copy~$X_i^H$ of~$H$; the identity map from~$H$ to~$X_i^H$
sends an element $x\in H$ to the symbol~$X_i^x$. 
Each map~$x \mapsto X_i^x$ is linear and is determined by its values on a linear basis of~$H$. 

Now take the tensor algebra on the direct sum $X_H = \bigoplus_{i\geq 1} \, X_i^H$:
\[
T = T(X_H) = T \left(\bigoplus_{i\geq 1} \, X_i^H \right) .
\]
This algebra is isomorphic to the algebra of \emph{non-commutative} polynomials in the indeterminates~$X_i^{x_r}$,
where $i= 1, 2, \ldots$ and $\{x_r\}_r$ is a linear basis of~$H$.
The algebra~$T$ is graded with all generators~$X_i^x$ homogeneous of degree~$1$.

There is a natural $H$-comodule algebra structure on~$T$ whose coaction $\delta : T \to T \otimes H$ is given by
\begin{equation*}\label{T-coaction}
\delta(X_i^x) = X_i^{x_1} \otimes x_2 \, .
\end{equation*}
The coaction obviously preserves the grading.

\begin{definition}
An element $P \in T$ is a polynomial $H$-identity for the $H$-comodule algebra~$A$
if $\mu(P) = 0$ for all $H$-comodule algebra maps $\mu : T \to A$.
\end{definition}

\begin{exas}
(a) When $H$ is the trivial one-dimensional Hopf algebra~$k$, then a $H$-comodule algebra~$A$
is nothing but an associative unital algebra.
In this case a polynomial $H$-identity for~$A$ is a classical polynomial identity, 
i.e., a non-commutative polynomial $P(X_1, X_2, \ldots)$
such that $P(a_1, a_2, \ldots) = 0$ for all $a_1, a_2, \ldots \in A$.

(b) When $H= kG$ is a group algebra, a polynomial $H$-identity is the same as a 
$G$-graded polynomial identity, as defined for instance in~\cite{BZ}.

(c) Let $H$ be an arbitrary Hopf algebra and $A$ an $H$-comodule algebra.
Assume that the subalgebra~$A^H$ of coinvariants is central in~$A$ 
(such a condition is satisfied e.g.\ when $A = {}^{\alpha} H$ is a twisted comodule algebra).
For $x,y \in H$ consider the following elements of~$T$:
\begin{equation*}
P_x = X_1^{x_1} \, X_1^{S(x_2)} 
\quad\text{and}\quad
Q_{x,y} = X_1^{x_1} \, X_1^{y_1} \, X_1^{S(x_2y_2)} \, .
\end{equation*}
Then the commutators
$P_x \, X_2^z - X_2^z \, P_x$ and $Q_{x,y} \, X_2^z - X_2^z \, Q_{x,y}$
are polynomial $H$-identities for~$A$ for all $x,y,z \in H$.
Indeed, $P_x$ and $Q_{x,y}$ are coinvariant elements of~$T$ by~\cite[Lem\-ma~2.1]{AK}.
Thus for any $H$-comodule algebra map $\mu : T \to A$,
the elements $\mu(P_x)$ and~$\mu(Q_{x,y})$ are coinvariant, hence central, in~$A$.
\end{exas}

Denote the set of all polynomial $H$-identities for~$A$ by~$\Id_H(A)$.
By definition,
\begin{equation*}\label{I(A)-def}
I_H(A) = \bigcap_{\mu}\, \ker \mu \, ,
\end{equation*}
where $\mu$ runs over all $H$-comodule algebra maps $T \to A$.

\begin{prop}
The set $I_H(A)$ has the following properties:

(a) it is a graded two-sided ideal of~$T = T(X_H)$, i.e.,
\[
I_H(A) \, T \subset I_H(A) \supset T \, I_H(A)
\]
and
\[
I_H(A) = \bigoplus_{r\geq 0} \,  \left( I_H(A) \, \bigcap \, T^r(X_H) \right)  ;
\]

(b) it is a right $H$-coideal of~$T$, i.e.,
\[
\delta\bigl(I_H(A)\bigr) \subset I_H(A) \otimes H \, ;
\]

(c) any endomorphism $f: T \to T$ of $H$-comodule algebras preserves~$I_H(A)$:
\[
f \left( I_H(A) \right) \subset I_H(A) \, .
\]
\end{prop}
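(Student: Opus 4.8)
The plan is to verify each of the three properties directly from the description $I_H(A) = \bigcap_\mu \ker\mu$, where $\mu$ ranges over all $H$-comodule algebra maps $T \to A$. Since each $\ker\mu$ is an ordinary two-sided ideal of~$T$ (because $\mu$ is an algebra map), so is the intersection, which gives the two-sided ideal part of~(a). For the grading statement in~(a), the key observation is that $T$ is graded with all generators $X_i^x$ in degree~$1$, and for a \emph{homogeneous} element to lie in $I_H(A)$ it suffices that each homogeneous component does; I would prove the nontrivial inclusion by a rescaling trick: given $\mu : T \to A$, the field~$k$ is infinite, so for any scalar $\lambda \in k^\times$ there is another comodule algebra map $\mu_\lambda$ obtained by $X_i^x \mapsto \lambda\,\mu(X_i^x)$ (this is still a comodule algebra map since the coaction is linear and the degree-$1$ generators are scaled uniformly). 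Then $\mu_\lambda(P) = \sum_r \lambda^r \mu(P^{(r)})$ for the homogeneous decomposition $P = \sum_r P^{(r)}$, and a Vandermonde/polynomial argument over the infinite field forces each $\mu(P^{(r)}) = 0$; hence $P \in I_H(A)$ implies $P^{(r)} \in I_H(A)$ for all~$r$.

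For part~(b), I want $\delta(I_H(A)) \subset I_H(A) \otimes H$. The natural approach is: fix $P \in I_H(A)$, write $\delta(P) = \sum_j P_j \otimes h_j$ with the $h_j \in H$ chosen linearly independent, and show each $P_j \in \ker\mu$ for every comodule algebra map $\mu : T \to A$. The point is that for such a~$\mu$, the square
\begin{equation*}
\begin{CD}
T @>\delta>> T \otimes H\\
@V\mu VV @VV{\mu \otimes \id}V\\
A @>\delta>> A \otimes H
\end{CD}
\end{equation*}
commutes, so $(\mu \otimes \id)(\delta(P)) = \delta(\mu(P)) = \delta(0) = 0$, i.e.\ $\sum_j \mu(P_j) \otimes h_j = 0$ in $A \otimes H$; since the $h_j$ are linearly independent, each $\mu(P_j) = 0$. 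As $\mu$ was arbitrary, $P_j \in I_H(A)$, so $\delta(P) \in I_H(A) \otimes H$.

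For part~(c), let $f : T \to T$ be an $H$-comodule algebra endomorphism and let $\mu : T \to A$ be any $H$-comodule algebra map. Then $\mu \circ f : T \to A$ is again an $H$-comodule algebra map (composition of such maps), so for $P \in I_H(A)$ we get $\mu(f(P)) = (\mu \circ f)(P) = 0$. Since $\mu$ is arbitrary, $f(P) \in I_H(A)$.

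I do not expect a genuine obstacle here: parts~(b) and~(c) are essentially formal consequences of naturality of~$\delta$ and of closure of the class of comodule algebra maps under composition. The only point requiring a little care is the grading claim in~(a), where one must genuinely use that $k$ is infinite in order to separate homogeneous components by the scaling map $\mu_\lambda$; over a finite field the statement can fail, so this hypothesis (assumed throughout the paper) is exactly what is needed, and spelling out the Vandermonde argument is the one slightly non-routine step.
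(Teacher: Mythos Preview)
Your proposal is correct and matches the approach the paper has in mind: the paper does not spell out the argument but simply refers to \cite[Prop.~2.2]{AK} and remarks that the infinite-field hypothesis is needed precisely for the graded claim. Your scaling map~$\mu_\lambda$ together with the Vandermonde separation is exactly the standard argument behind that reference, and your treatments of~(b) via colinearity of~$\mu$ and of~(c) via closure under composition are the expected formal verifications.
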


The proof follows the same lines as the proof of~\cite[Prop.~2.2]{AK}. 
Note that the assumption that $k$ is infinite is needed to establish that the ideal~$I_H(A)$ is graded.
We can summarize Property\,(c) by saying that $I_H(A)$ is a $T$-ideal, 
a standard concept in the theory of polynomial identities (see~\cite{Ro}).

It is also clear that, if $A \to A'$ is a map of $H$-comodule algebras, then 
\[
I_H(A) \subset I_H(A') \, .
\]
In particular, if $A$ and $A'$ are isomorphic $H$-comodule algebras, then 
\[
I_H(A) = I_H(A') \, .
\]

In \S\S\,\ref{sec-Taft}--\ref{sec-En} we will consider certain finite-dimensional Hopf algebras~$H$ such that 
the equality $I_H(A) = I_H(A')$ for twisted comodule algebras $A$, $A'$
implies that $A$ and $A'$ are isomorphic.

To this end, we next show how to detect polynomial $H$-identities for twisted comodule algebras.

\subsection{Detecting polynomial identities}\label{ssec-detect}

Let ${}^{\alpha} H$ be a twisted comodule algebra for some normalized convolution invertible two-cocycle~$\alpha$,
as defined in\,\S\,\ref{ssec-prel}. 
We claim that the polynomial $H$-identities for~${}^{\alpha} H$ can be detected 
by a ``universal'' comodule algebra map
\[
\mu_{\alpha}: T \to S \otimes {}^{\alpha} H \, ,
\]
which we now define.

For each $i= 1,2, \ldots$, consider a copy $t_i^H$ of~$H$, identifying $x\in H$ linearly with the symbol~$t_i^x \in t_i^H$,
and define $S$ to be the symmetric algebra on the direct sum $t_H = \bigoplus_{i\geq 1} \, t_i^H$:
\[
S = S(t_H) = S \left(\bigoplus_{i\geq 1} \, t_i^H \right) .
\]
The algebra~$S$ is isomorphic to the algebra of \emph{commutative} polynomials in the indeterminates~$t_i^{x_r}$,
where $i = 1, 2, \ldots$ and $\{x_r\}_r$ is a linear basis of~$H$.

The map $\mu_{\alpha}: T \to S \otimes {}^{\alpha} H$ is given by
\begin{equation}\label{mu}
\mu_{\alpha}(X_i^x) = t_i^{x_1} \otimes u_{x_2} \, .
\end{equation}

The algebra~$S \otimes {}^{\alpha}H$ is generated by the symbols $t_i^x u_y$ ($x,y \in H; i\geq 1)$ as a $k$-algebra
(we drop the tensor product sign~$\otimes$ between the $t$-symbols and the $u$-symbols). 
It is an $H$-comodule algebra whose $S(t_H)$-linear coaction extends the coaction of~${}^{\alpha}H$:
\begin{equation*}
\delta(t_i^x u_y) = t_i^x u_{y_1} \otimes y_2 \, .
\end{equation*}

It is easy to check that $\mu_{\alpha}: T \to S \otimes {}^{\alpha} H$ is an $H$-comodule algebra map.
Its \emph{raison d'\^etre} becomes clear in the following statement.

\begin{theorem}\label{detect}
An element $P \in T$ is a polynomial $H$-identity for~${}^{\alpha} H$ 
if and only if $\mu_{\alpha}(P) = 0$; equivalently,
$I_H({}^{\alpha} H) = \ker \mu_{\alpha}$.
\end{theorem}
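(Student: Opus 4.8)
The plan is to prove the two inclusions $I_H({}^{\alpha}H) \supset \ker\mu_{\alpha}$ and $I_H({}^{\alpha}H) \subset \ker\mu_{\alpha}$ separately, the first being essentially formal and the second requiring a specialization argument that exploits the infinitude of~$k$.

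For the inclusion $\ker\mu_{\alpha} \subset I_H({}^{\alpha}H)$, I would take $P \in \ker\mu_{\alpha}$ and an arbitrary $H$-comodule algebra map $\mu : T \to {}^{\alpha}H$, and show $\mu(P) = 0$. The idea is that $\mu$ factors through~$\mu_{\alpha}$ after a suitable specialization of the commuting variables~$t_i^x$. Concretely, for each~$i$ the composite $x \mapsto (\id \otimes \varepsilon)\bigl(\delta(\mu(X_i^x))\bigr)$, or more precisely the data of~$\mu(X_i^x) \in {}^{\alpha}H$, determines scalars: writing $\mu(X_i^x) = \sum_r \lambda_{i,r}(x)\, u_{x_r}$ in the basis~$\{u_{x_r}\}_r$, each $\lambda_{i,r}$ is a linear functional on~$H$, hence an element of~$H^{*}$ that can be encoded as an algebra map $S(t_H) \to k$ sending~$t_i^{x_r}$ to~$\lambda_{i,r}(x_r)$ appropriately. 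One checks that the resulting composite $T \xrightarrow{\mu_{\alpha}} S \otimes {}^{\alpha}H \xrightarrow{\mathrm{ev} \otimes \id} k \otimes {}^{\alpha}H = {}^{\alpha}H$ is an $H$-comodule algebra map agreeing with~$\mu$ on the generators~$X_i^x$ — both send~$X_i^x$ to the element of~${}^{\alpha}H$ with the prescribed coordinates — hence equals~$\mu$. The compatibility of the coactions, i.e. that $\mu$ being a comodule map is exactly what makes this specialization well defined, is where the formula~\eqref{mu} for~$\mu_{\alpha}$ is used: the first leg~$t_i^{x_1}$ records the coefficient and the second leg~$u_{x_2}$ is forced by coassociativity. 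Since $\mu(P) = (\mathrm{ev}\otimes\id)(\mu_{\alpha}(P)) = 0$, we conclude $P \in I_H({}^{\alpha}H)$.

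For the reverse inclusion $I_H({}^{\alpha}H) \subset \ker\mu_{\alpha}$, suppose $P$ is a polynomial $H$-identity for~${}^{\alpha}H$; I must show $\mu_{\alpha}(P) = 0$ as an element of~$S \otimes {}^{\alpha}H$. The element~$\mu_{\alpha}(P)$ is, in each~${}^{\alpha}H$-coordinate, a commutative polynomial in finitely many of the~$t_i^{x_r}$. The point is that for every choice of scalars $c_{i,r} \in k$ — equivalently, every algebra map $\mathrm{ev}_c : S \to k$ — the composite $(\mathrm{ev}_c \otimes \id)\circ \mu_{\alpha} : T \to {}^{\alpha}H$ is an $H$-comodule algebra map (one verifies this is a comodule map directly, the coaction on~$S \otimes {}^{\alpha}H$ being $S$-linear), hence kills~$P$; so the polynomial~$\mu_{\alpha}(P)$ vanishes under every such evaluation. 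Because~$k$ is infinite, a commutative polynomial over~$k$ vanishing at all $k$-points is the zero polynomial, so $\mu_{\alpha}(P) = 0$.

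The main obstacle is the first inclusion: one has to see clearly that an arbitrary comodule algebra map $\mu : T \to {}^{\alpha}H$ is captured by a specialization of~$\mu_{\alpha}$, i.e. that the ``universal'' target $S \otimes {}^{\alpha}H$ together with~\eqref{mu} genuinely parametrizes all of them. The key technical check is that the assignment $X_i^x \mapsto \mu(X_i^x)$ together with the comodule condition $\delta(\mu(X_i^x)) = \mu(X_i^{x_1}) \otimes x_2$ forces the coordinates of~$\mu(X_i^x)$ to depend on~$x$ exactly through the pattern~$t_i^{x_1} \otimes u_{x_2}$, so that defining $\mathrm{ev} : S \to k$ by those coordinates yields a \emph{well-defined algebra map} (linearity in each~$X_i^H$ and the freeness of~$S$ on~$t_H$ make this automatic once the coordinates are identified). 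The rest — that $\mu_{\alpha}$ is itself a comodule algebra map, that evaluations $S \to k$ induce comodule algebra maps on $S \otimes {}^{\alpha}H$, and that $T$ is free as an $H$-comodule algebra on the~$X_i^H$ in the appropriate sense — is routine and can be invoked or dispatched in a line each.
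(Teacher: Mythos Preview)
Your proposal is correct and follows essentially the same approach as the paper: both directions are handled identically, with the inclusion $\ker\mu_{\alpha}\subset I_H({}^{\alpha}H)$ resting on the fact that every $H$-comodule algebra map $T\to{}^{\alpha}H$ factors as $(\chi\otimes\id)\circ\mu_{\alpha}$ for some algebra map $\chi:S\to k$, and the reverse inclusion using that evaluations $S\to k$ yield comodule algebra maps together with the infinitude of~$k$. The only organizational difference is that the paper isolates the factorization statement as a separate proposition and proves it via the standard isomorphism $\Hom^H(M,H)\cong\Hom(M,k)$, $\mu\mapsto\eps\circ\mu$, which gives a cleaner replacement for your coordinate-level description of how the specialization $\chi$ is extracted from~$\mu$.
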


To prove Theorem~\ref{detect} we need the following proposition.

\begin{prop}\label{lem-mu-univ}
For every $H$-comodule algebra map $\mu : T \to {}^{\alpha} H$, 
there is a unique algebra map $\chi : S \to k$ such that 
\[
\mu = (\chi \otimes \id) \circ \mu_{\alpha} \, .
\]
\end{prop}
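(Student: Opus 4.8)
The plan is to show that any $H$-comodule algebra map $\mu : T \to {}^{\alpha}H$ factors through $\mu_{\alpha}$ via scalars on the commutative-polynomial part. First I would exploit the fact that $T = T(X_H)$ is free: an $H$-comodule algebra map out of $T$ is nothing but a linear map $X_H \to {}^{\alpha}H$ that is compatible with the coactions, since any $k$-linear map on the generators extends uniquely to an algebra map, and the coaction-compatibility need only be checked on generators (both $\delta_T$ and $\delta_{{}^{\alpha}H}$ being algebra maps, the set of elements on which $\mu$ intertwines the coactions is a subalgebra). So the data of $\mu$ amounts to a sequence of linear maps $f_i : X_i^H \to {}^{\alpha}H$ with $\delta(f_i(X_i^x)) = (f_i \otimes \id)\,\delta(X_i^x) = f_i(X_i^{x_1}) \otimes x_2$ in ${}^{\alpha}H \otimes H$.

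Next I would analyze this colinearity condition using the known coaction $\delta(u_x) = u_{x_1}\otimes x_2$ on ${}^{\alpha}H$. Writing $f_i(X_i^x) = \sum$ (coefficients) $u_{(\cdot)}$, the condition $\delta \circ f_i = (f_i\otimes\id)\circ \delta_T$ says precisely that $f_i$, viewed as a linear endomorphism of the underlying space of $H$ (through the identifications $X_i^H \cong H \cong u_H$), is a right $H$-comodule endomorphism of $H$ with its regular coaction $\Delta$. The right $H$-comodule endomorphisms of $(H,\Delta)$ are exactly the left multiplications by elements of $H$, but for the purpose here the cleaner statement is that such an $f_i$ is determined by the single linear functional $\chi_i := \eps \circ (\text{something})$; concretely, set $\chi_i(x) := $ the coefficient extracted by applying $\id\otimes\eps$, i.e. $f_i(X_i^x) = \chi_i(x_1)\, u_{x_2}$ for a unique linear form $\chi_i : H \to k$. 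This is the key structural step, and it is where the coalgebra fundamental theorem / the rigidity of the regular comodule is used; I expect this to be the main obstacle, since one must be careful that the map $x \mapsto f_i(X_i^x)$ really is forced into the form $x \mapsto \chi_i(x_1) u_{x_2}$ and nothing more general survives the colinearity constraint.

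Having the $\chi_i$, I would then assemble the algebra map $\chi : S \to k$. Recall $S = S(t_H)$ is the free commutative algebra on $t_H = \bigoplus_i t_i^H$, so an algebra map $\chi : S \to k$ is the same as an arbitrary linear map $t_H \to k$, i.e.\ a sequence of linear forms $t_i^H \to k$; I define $\chi(t_i^x) := \chi_i(x)$ and extend multiplicatively. It remains to check $\mu = (\chi\otimes\id)\circ\mu_{\alpha}$ and uniqueness. On the generator $X_i^x$ we compute $(\chi\otimes\id)(\mu_{\alpha}(X_i^x)) = (\chi\otimes\id)(t_i^{x_1}\otimes u_{x_2}) = \chi(t_i^{x_1})\, u_{x_2} = \chi_i(x_1)\, u_{x_2} = f_i(X_i^x) = \mu(X_i^x)$, so the two algebra maps agree on generators of $T$, hence everywhere. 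For uniqueness, if $\chi'$ also satisfies $\mu = (\chi'\otimes\id)\circ\mu_{\alpha}$, then applying $\id\otimes\eps$ to both sides on $X_i^x$ gives $\chi'(t_i^{x_1})\,\eps(x_2) = \chi'(t_i^x)$ equal to the same expression for $\chi$, so $\chi'$ and $\chi$ agree on all $t_i^x$ and therefore on $S$. This completes the proof.
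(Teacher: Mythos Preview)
Your proof is correct and follows essentially the same route as the paper: reduce via the universal property of~$T$ to $H$-colinear maps $X_H \to {}^{\alpha}H \cong H$, invoke the standard fact that $\Hom^H(M,H) \cong \Hom(M,k)$ via $\mu \mapsto \eps\circ\mu$ with inverse $\chi \mapsto (\chi\otimes\id)\circ\delta$ (this is exactly your formula $f_i(X_i^x) = \chi_i(x_1)\,u_{x_2}$), and then use the universal property of~$S$ to package the $\chi_i$ into an algebra map $\chi:S\to k$. The only cosmetic slip is in the uniqueness step, where ``applying $\id\otimes\eps$'' should read ``applying $\eps\circ u^{-1}$'' (the target ${}^{\alpha}H$ is not a tensor product), but the computation you intend is correct.
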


\begin{proof}
By the universal property of the tensor algebra~$T$,
the set of $H$-comodule algebra maps $T \to {}^{\alpha} H$ is naturally in bijection
with the vector space $\Hom^H(X_H,{}^{\alpha} H)$ of $H$-colinear maps from~$X_H$ to~${}^{\alpha} H$.
Now, since the isomorphism $u: H \to u_H = {}^{\alpha} H$ is a comodule map, we have a natural identification
$\Hom^H(X_H,{}^{\alpha} H) \cong \Hom^H(X_H,H)$.

On the other hand, by the universal property of the symmetric algebra~$S$, the set of algebra maps $S \to k$ is in bijection
with the vector space $\Hom(t_H,k)$ of linear maps from~$t_H$ to~$k$.

Recall a basic fact from ``colinear algebra'': for any right $H$-comodule~$M$ with coaction $\delta: M \to M \otimes H$,
the linear map $\Hom^H(M,H) \to \Hom(M,k)$ given by $\mu \mapsto \eps \circ \mu$ is an isomorphism
with inverse $\chi \mapsto (\chi \otimes \id) \circ \delta$.
As a consequence, any $H$-comodule map $\mu : M \to H$ is necessarily 
of the form $\mu = (\chi \otimes \id) \circ \delta$, where $\chi = \eps \circ \mu$.

Combining these observations yields a proof of the proposition.
We can be even more precise: the algebra map $\chi : S \to k$ uniquely associated to~$\mu$ 
in the statement is determined on the generators~$t_i^x$ by $\chi(t_i^x) = \eps (u^{-1}(\mu(X_i^x))$.
\end{proof}

\begin{proof}[Proof of Theorem~\ref{detect}]
Let $P \in T$ be in the kernel of~$\mu_{\alpha}$. Since by Proposition\,\ref{lem-mu-univ}
any $H$-comodule algebra map $\mu : T \to {}^{\alpha} H$ is of the form
$\mu = (\chi \otimes \id) \circ \mu_{\alpha}$, it follows that $\mu(P)=0$.
This implies $\ker \mu_{\alpha} \subset I_H({}^{\alpha} H)$.

To prove the converse inclusion, start from a polynomial $H$-identity~$P$
and observe that for every algebra map  $\chi : S \to k$, 
the composite map $\mu = (\chi \otimes \id) \circ \mu_{\alpha}$
from~$T$ to~${}^{\alpha} H$ is a comodule algebra map. 
By definition of a polynomial $H$-identity, we thus have $\mu(P)= 0$.
Now choose a basis~$\{x_r\}_r$ of~$H$ and expand $\mu_{\alpha}(P)$ as
$\mu_{\alpha}(P) = \sum_r \, \mu^{(r)}_{\alpha}(P) \otimes u_{x_r}$,
where $\mu^{(r)}_{\alpha}(P)$ belongs to~$S$. Then 
\begin{equation*}
0 = \mu(P) = \sum_r \, \chi(\mu^{(r)}_{\alpha}(P)) \,u_{x_r} \, .
\end{equation*}
Since the elements $u_{x_r}$ are linearly independent, we have $\chi(\mu^{(r)}_{\alpha}(P)) = 0$ for all~$r$.
This means that $\mu^{(r)}_{\alpha}(P) \in S$ vanishes under any evaluation $\chi : S \to k$;
in other words, the polynomial $\mu^{(r)}_{\alpha}(P)$ takes only zero values. 
The ground field~$k$ being infinite, this implies $\mu^{(r)}_{\alpha}(P) = 0$, and hence $\mu_{\alpha}(P) = 0$.
Therefore, $I_H({}^{\alpha} H) \subset \ker \mu_{\alpha}$.
\end{proof}

\section{Taft algebras}\label{sec-Taft}

Let $n$ be an integer~$\geq 2$ and $k$ a field whose characteristic does not divide~$n$.
We assume that $k$ contains a primitive $n$-th root of unity, which we denote by~$q$.

\subsection{Galois objects over a Taft algebra}\label{Taft-def}

The Taft algebra~$H_{n^2}$ has the following presentation as a $k$-algebra:
\[
H_{n^2} = k \, \langle\, x,y \,|\, x^n = 1 \, , \; yx = q xy\, , \;  y^n = 0 \,  \rangle 
\]
(see~\cite{Tf}). The set $\{x^iy^j\}_{0 \leq i,j \leq n-1}$ is a basis of the 
vector space~$H_{n^2}$, which therefore is of dimension~$n^2$. 

The algebra~$H_{n^2}$ is a Hopf algebra with coproduct~$\Delta$, counit~$\eps$ and antipode~$S$ defined by
\begin{eqnarray}\label{coproduct}
\Delta(x) = x \otimes x\, ,& \quad & \Delta(y) = 1 \otimes y + y \otimes x\, ,\\
\eps(x) = 1\, ,& \quad & \eps(y) = 0 \, ,\\
S(x) = x^{-1} = x^{n-1} \, ,& \quad & S(y) = - yx^{-1} = - q^{-1} x^{n-1} y \, .
\end{eqnarray}
When $n=2$, this is the four-dimensional Sweedler algebra.

Given scalars $a, c \in k$ such that $a \neq 0$, we consider the algebra~$A_{a,c}$ 
with the following presentation:
\[
A_{a,c} = k \, \langle\, x,y \,|\, x^n = a \, , \; yx = q xy \, , \; y^n = c  \, \rangle \, . 
\]
This is a right $H_{n^2}$-comodule algebra with coaction given by the same formulas as\,\eqref{coproduct}.

By~\cite[Prop.\,2.17 and Prop.\,2.22]{Ma1} (see also \cite{DT2}), any Galois object over~$H_{n^2}$ is isomorphic
to~$A_{a,c}$ for some scalars $a$, $c$ with $a\neq 0$. 
Moreover, $A_{a,c}$ is isomorphic to $A_{a',c'}$ as a comodule algebra
if and only if there is $v\in k^{\times} = k - \{0\}$ such that
$a' = v^n a$ and $c' = c$.
It follows that $(a,c) \mapsto A_{a,c}$ induces a bijection between $k^{\times}/(k^{\times})^n \times k$
and the set of isomorphism classes of Galois objects over~$H_{n^2}$. 
(Note that $k^{\times}/(k^{\times})^n$ is isomorphic to the cohomology group~$H^2(G,k^{\times})$.)

If $k$ is algebraically closed, then $k^{\times} = (k^{\times})^n$
and any Galois object over~$H_{n^2}$ is isomorphic to~$A_{1,c}$ for a unique scalar~$c$.

\subsection{A polynomial identity distinguishing the Galois objects}\label{Taft-PI}

Let $A = A_{a,c}$ be a Galois object as defined in \S\,\ref{Taft-def}.
Such a comodule algebra is a twisted comodule algebra~${}^{\alpha} H_{n^2}$ for some 
normalized convolution invertible two-cocycle~$\alpha$. 
It can be checked that the map $u: H_{n^2} \to A_{a,c}$ is such that $u_1 = 1$, $u_x = x$ and $u_y = y$.
This allows us to compute the corresponding universal comodule algebra map
$\mu_{\alpha} : T \to S \otimes A_{a,c}$ on certain elements of~$T$.

For simplicity, we set 
$E = X_1^1$, $X= X_1^x$, $Y= X_1^y$ for the $X$-symbols, and 
$t_1 = t_1^1$, $t_x= t_1^x$, $t_y= t_1^y$ for the $t$-symbols. 
In view of\,\eqref{mu} and\,\eqref{coproduct},
we have
\begin{equation}\label{mu-Taft}
\mu_{\alpha}(E) = t_1 \, , \quad \mu_{\alpha}(X) = t_x x\, , \quad \mu_{\alpha}(Y) = t_1 y + t_y x \, .
\end{equation}
(In the previous formulas we consider the commuting $t$-variables as extended scalars;
this allows us to drop the unit~$u_1$ of~$A_{a,c}$ and the tensor symbols between the $t$-variables 
and the $u$-variables.)

\begin{prop}\label{mainth}
The degree~$2n$ polynomial
\[
P_c = (YX - qXY)^n - (1-q)^n X^nY^n +  (1-q)^n c \,  E^nX^n 
\]
is a polynomial $H_{n^2}$-identity for the Galois object~$A_{a,c}$.
\end{prop}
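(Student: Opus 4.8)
The plan is to use Theorem~\ref{detect}: it suffices to show that $\mu_\alpha(P_c) = 0$ in $S \otimes A_{a,c}$. By~\eqref{mu-Taft} we have the explicit formulas $\mu_\alpha(E) = t_1$, $\mu_\alpha(X) = t_x x$, $\mu_\alpha(Y) = t_1 y + t_y x$, and since $\mu_\alpha$ is an algebra map we simply substitute these into $P_c$ and compute. All the $t$-symbols are central (they lie in the commutative algebra $S$), so the only non-commutativity comes from the relation $yx = qxy$ in $A_{a,c}$, together with $x^n = a$ and $y^n = c$.

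First I would compute the ``commutator-like'' term $\mu_\alpha(YX - qXY)$. We have $\mu_\alpha(YX) = (t_1 y + t_y x)(t_x x) = t_1 t_x\, yx + t_x t_y\, x^2$ and $\mu_\alpha(XY) = (t_x x)(t_1 y + t_y x) = t_1 t_x\, xy + t_x t_y\, x^2$. Using $yx = qxy$, the difference $\mu_\alpha(YX) - q\,\mu_\alpha(XY) = t_1 t_x\,(yx - qxy) + (1-q)\,t_x t_y\, x^2 = (1-q)\, t_x t_y\, x^2$. So $\mu_\alpha(YX - qXY) = (1-q)\, t_x t_y\, x^2$, a remarkably simple expression: it is a scalar multiple of $x^2$. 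Raising to the $n$-th power and using that $t_x, t_y$ are central and $x^n = a$, I get $\mu_\alpha\bigl((YX-qXY)^n\bigr) = (1-q)^n\, t_x^n t_y^n\, x^{2n} = (1-q)^n\, t_x^n t_y^n\, a^2$, which lands in $S$ (i.e.\ in $S \otimes k\,1_A$).

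Next I would compute $\mu_\alpha(X^n Y^n)$ and $\mu_\alpha(E^n X^n)$. For the latter, $\mu_\alpha(E^n X^n) = t_1^n\, (t_x x)^n = t_1^n t_x^n\, x^n = a\, t_1^n t_x^n$. For $\mu_\alpha(X^n Y^n)$, first $\mu_\alpha(X^n) = t_x^n\, x^n = a\, t_x^n$, and then I need $\mu_\alpha(Y^n) = (t_1 y + t_y x)^n$. This is the one genuine computation: expanding $(t_1 y + t_y x)^n$ in the algebra $A_{a,c}$ where $yx = qxy$. This is a $q$-binomial expansion; the key point is that $y$ and $x$ $q$-commute, so $(t_1 y + t_y x)^n = \sum_{j=0}^n \binom{n}{j}_q\, t_1^{\,j} t_y^{\,n-j}\, y^j x^{n-j}$ (up to getting the precise normalization and which variable sits where), and because $q$ is a primitive $n$-th root of unity, all the $q$-binomial coefficients $\binom{n}{j}_q$ vanish for $0 < j < n$. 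Hence only the two extreme terms survive: $\mu_\alpha(Y^n) = t_1^n\, y^n + t_y^n\, x^n = c\, t_1^n + a\, t_y^n$. Therefore $\mu_\alpha(X^n Y^n) = a\, t_x^n\,(c\, t_1^n + a\, t_y^n) = ac\, t_x^n t_1^n + a^2\, t_x^n t_y^n$.

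Finally I assemble: $\mu_\alpha(P_c) = (1-q)^n a^2\, t_x^n t_y^n - (1-q)^n\bigl(ac\, t_x^n t_1^n + a^2\, t_x^n t_y^n\bigr) + (1-q)^n c\,(a\, t_1^n t_x^n) = 0$, the $a^2 t_x^n t_y^n$ terms cancelling between the first two summands and the $ac\, t_1^n t_x^n$ terms cancelling between the last two. By Theorem~\ref{detect}, $P_c \in I_{H_{n^2}}(A_{a,c})$, which is the claim. The main obstacle is the $q$-binomial expansion of $(t_1 y + t_y x)^n$: one must carefully track the $q$-commutation relation and justify the vanishing of the intermediate $q$-binomial coefficients at a primitive $n$-th root of unity (the standard fact $\binom{n}{j}_q = 0$ for $0<j<n$, which follows from the Gaussian binomial formula since the $q$-integer $[n]_q = 0$). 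Everything else is bookkeeping with central $t$-variables and the relations $x^n = a$, $y^n = c$.
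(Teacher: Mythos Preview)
Your proof is correct and follows essentially the same route as the paper: both reduce to checking $\mu_\alpha(P_c)=0$ via Theorem~\ref{detect}, compute $\mu_\alpha(YX-qXY)=(1-q)t_xt_yx^2$, and handle $\mu_\alpha(Y^n)$ by the identity $(u+v)^n=u^n+v^n$ for $q$-commuting $u,v$ at a primitive $n$-th root of unity. The only cosmetic difference is that the paper quotes this last identity as a known lemma, whereas you sketch its proof via the vanishing of the intermediate $q$-binomial coefficients.
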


This polynomial is a generalization of the degree~$4$ identity
\[
(XY+YX)^2 - 4 X^2 Y^2 + 4c\, E^2 X^2
\]
obtained for the Sweedler algebra in~\cite[Cor.\,10.4]{AK}
(in the special case $b=0$).

\begin{proof}
It suffices to check that $\mu_{\alpha}(P_c)= 0$ 
using\,\eqref{mu-Taft} and the defining relations of~$A_{a,c}$.

Since $yx = qxy$, we have
\begin{eqnarray*}
\mu_{\alpha}(YX- qXY) 
& = & (t_1 y + t_y x)t_x x - q t_x x (t_1 y + t_y x) \\
& = & (1-q) t_xt_y x^2 + t_1t_x (yx - q xy) \\
& = & (1-q) t_xt_y x^2 \, .
\end{eqnarray*}
Therefore, in view of~$x^n = a$, we obtain
\begin{equation*}
\mu_{\alpha}\left( (YX- qXY)^n \right) = (1-q)^n t_x^nt_y^n x^{2n} = a^2 (1-q)^n t_x^nt_y^n \, .
\end{equation*}
We also have $\mu_{\alpha}(E^n) = t_1^n$ and $\mu_{\alpha}(X^n) = t_x^n x^n = at_x^n$. 

To compute $\mu_{\alpha}(Y^n)$, we need the following well-known fact (see \cite[Lem\-ma~2.2]{Ma1}): 
if $u$ and $v$ satisfy the relation $vu = q uv$ for some primitive $n$-root of unity~$q$, then
\begin{equation}\label{uv}
(u+v)^n = u^n + v^n \, .
\end{equation}
Since $yx = qxy$, we may apply\,\eqref{uv} to $u= t_y x$ and $v = t_1 y$.
We thus obtain
\begin{equation*}
\mu_{\alpha}(Y^n) = t_y^n x^n + t_1^n y^n 
= a t_y^n + c t_1^n \, .
\end{equation*}
Combining the previous equalities, we obtain $\mu_{\alpha}(P_c) = 0$.
\end{proof}

The following result shows that the polynomial identity of Proposition~\ref{mainth} distinguishes
the Galois objects of the Taft algebra.

\begin{theorem}\label{maincor}
If $k$ is algebraically closed, then
$\Id_{H_{n^2}}(A_{a,c}) = \Id_{H_{n^2}}(A_{a',c'})$
implies that $A_{a,c}$ and $A_{a',c'}$ are isomorphic comodule algebras.
\end{theorem}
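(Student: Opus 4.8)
The plan is to show that if $k$ is algebraically closed, the polynomial $H_{n^2}$-identity $P_c$ of Proposition~\ref{mainth} together with the classification recalled in \S\,\ref{Taft-def} already pins down the isomorphism class of $A_{a,c}$. Since $k$ is algebraically closed, by \S\,\ref{Taft-def} every Galois object is isomorphic to $A_{1,c}$ for a \emph{unique} scalar $c$, so it suffices to prove that $\Id_{H_{n^2}}(A_{1,c}) = \Id_{H_{n^2}}(A_{1,c'})$ forces $c = c'$. Assume to the contrary that $c \neq c'$. The element
\[
P_c - P_{c'} = (1-q)^n (c - c') \, E^n X^n
\]
lies in $T$, and since $P_c \in \Id_{H_{n^2}}(A_{1,c})$ and $P_{c'} \in \Id_{H_{n^2}}(A_{1,c'})$, the hypothesis $\Id_{H_{n^2}}(A_{1,c}) = \Id_{H_{n^2}}(A_{1,c'})$ gives $P_c \in \Id_{H_{n^2}}(A_{1,c'})$, hence $P_c - P_{c'} \in \Id_{H_{n^2}}(A_{1,c'})$. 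As $q$ is a primitive $n$-th root of unity, $(1-q)^n \neq 0$, and as $c \neq c'$ we conclude that $E^n X^n$ itself is a polynomial $H_{n^2}$-identity for $A_{1,c'}$.

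First I would use Theorem~\ref{detect} and formula~\eqref{mu-Taft} to contradict this. Applying the universal map $\mu_\alpha : T \to S \otimes A_{1,c'}$ one computes
\[
\mu_\alpha(E^n X^n) = t_1^n \, (t_x x)^n = t_1^n t_x^n x^n = t_1^n t_x^n
\]
(using $x^n = 1$ in $A_{1,c'}$ and the commutativity of the $t$-variables). This is a nonzero element of $S \otimes A_{1,c'}$ — indeed $t_1^n t_x^n$ is a nonzero commutative monomial in $S$ — so by Theorem~\ref{detect} $E^n X^n$ is \emph{not} a polynomial $H_{n^2}$-identity for $A_{1,c'}$, a contradiction. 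Therefore $c = c'$, and $A_{1,c} \cong A_{1,c'}$.

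For the general case (before reducing to $a = 1$), I would first observe that $A_{a,c} \cong A_{1,c}$ as comodule algebras when $k$ is algebraically closed, by the classification in \S\,\ref{Taft-def} (since $k^\times = (k^\times)^n$), and that isomorphic comodule algebras have equal $T$-ideals. Thus $\Id_{H_{n^2}}(A_{a,c}) = \Id_{H_{n^2}}(A_{1,c})$ and $\Id_{H_{n^2}}(A_{a',c'}) = \Id_{H_{n^2}}(A_{1,c'})$, and the hypothesis reduces to $\Id_{H_{n^2}}(A_{1,c}) = \Id_{H_{n^2}}(A_{1,c'})$, which by the above forces $c = c'$ and hence $A_{a,c} \cong A_{1,c} = A_{1,c'} \cong A_{a',c'}$. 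The only real content is the computation $\mu_\alpha(E^n X^n) = t_1^n t_x^n \neq 0$; the main obstacle, such as it is, is being careful that the $u$-symbols identification $u_1 = 1$, $u_x = x$, $u_y = y$ from \S\,\ref{Taft-PI} is used correctly so that $\mu_\alpha$ really is the universal map for $A_{1,c'}$ — once that is in hand, Theorem~\ref{detect} does all the work and no subtle argument is needed.
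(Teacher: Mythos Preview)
Your proof is correct and follows essentially the same approach as the paper: reduce to $a=a'=1$, take the difference $P_c-P_{c'}=(1-q)^n(c-c')E^nX^n$, and use the universal map computation $\mu_\alpha(E^nX^n)=t_1^nt_x^n\neq 0$ together with Theorem~\ref{detect} to force $c=c'$. The only cosmetic difference is that you phrase it as a contradiction and apply $\mu_\alpha$ for $A_{1,c'}$ rather than $A_{1,c}$, which is immaterial by symmetry.
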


\begin{proof}
By the last remark in \S\,\ref{Taft-def}, we may assume $a=a'=1$.
Consider the elements $P_c \in \Id_{H_{n^2}}(A_{1,c})$ and $P_{c'} \in \Id_{H_{n^2}}(A_{1,c'})$
given by Proposition\,\ref{mainth}. By the equality of $T$-ideals, both $P_c$ and $P_{c'}$ 
are polynomial $H_{n^2}$-identities for $A_{1,c}$. Hence, so is the difference $P_c - P_{c'}$.
Therefore, $\mu_{\alpha}(P_c - P_{c'}) = 0$. Now, 
\begin{equation*}
\mu_{\alpha}(P_c - P_{c'})  = \mu_{\alpha}\left( (c-c') (1-q)^n E^nX^n \right) = 
(c-c') (1-q)^n t_1 ^n t_x^n \, .
\end{equation*}
Since $(1-q)^n t_1 ^n t_x^n \neq 0$, we have $c - c' = 0$.
This implies $A_{1,c} = A_{1,c'}$.
\end{proof}

The previous proof shows that the theorem holds if we only assume
an inclusion $\Id_{H_{n^2}}(A_{a',c'}) \subset \Id_{H_{n^2}}(A_{a,c})$ of $T$-ideals.
Note also that the single polynomial $H_{n^2}$-identity~$P_c$
determines the full $T$-ideal~$\Id_{H_{n^2}}(A_{a,c})$ over an algebraically closed field.

\begin{rem}
If $k$ is \emph{not} algebraically closed, then the equality of $T$-ideals of Theorem\,\ref{maincor}
imply that $A_{a',c'}$ is a \emph{form} of~$A_{a,c}$.
Recall that an $H_{n^2}$-comodule algebra $A$ is a {form} of~$A_{a,c}$ if there is 
an algebraic extension $k'$ of~$k$ such that $k'\otimes_k A$ and $k'\otimes_k A_{a,c}$ 
are isomorphic comodule algebras over the Hopf algebra $k'\otimes_k H_{n^2}$.
\end{rem}

\subsection{Monomial Hopf algebras}\label{monomial}

Taft algebras can be generalized as follows. 
Let $G$ be a finite group and $x$ a central element of~$G$ of order~$n$.
We also assume that there exists a homomorphism $\chi: G \to k^{\times}$
such that $\chi^n = 1$ and $\chi(x) = q$ is the fixed primitive $n$-root of unity.

To these data we associate a Hopf algebra~$H$ as follows: as an algebra,
$H$~is the quotient of the free product $kG * k[y]$ by the two-sided ideal generated
by the relations
\begin{equation*}
y^n = 0
\quad\text{and}\quad
yg = \chi(g) \, gy \, . \quad (g\in G)
\end{equation*}
The elements $gy^i$, where $g$ runs over the elements of~$G$ and $i= 0, 1, \ldots, n-1$, form a basis
of~$H$, whose dimension is equal to~$n |G|$.

The algebra~$H$ has a Hopf algebra structure such that $kG$ is a Hopf subalgebra of~$H$
and
\begin{equation*}
\Delta(y) = 1 \otimes y + y \otimes x \, , \quad
\eps(y) = 0 \, , \quad S(y) = - yx^{-1} \, .
\end{equation*}
In the literature this Hopf algebra is called a \emph{monomial Hopf algebra of type~$I$}
(see\,\cite[Sect.\,7]{BC},\,\cite{CHYZ}).

When $G = \ZZ/n$ and $x$ is a generator of~$G$, then $H$ is the Taft algebra~$H_{n^2}$.
Note that for an arbitrary finite group~$G$ the inclusion $\ZZ/n \, x \subset G$ induces 
a natural inclusion $H_{n^2} \subset H$ of Hopf algebras.

Given a two-cocycle $\sigma \in Z^2(G,k^{\times})$ of the group~$G$ and a scalar $c\in k$,
we define $A_{\sigma, c}$ as the algebra generated by the symbols $u_y$ and $u_g$ for all $g\in G$
and the relations
\begin{equation}\label{rel-kG}
u_1 = 1 \, , \qquad 
u_g u_h = \sigma(g,h) \, u_{gh} \, , 
\end{equation}
\begin{equation}
u_y^n = c \, , \qquad 
u_y u_g  = \chi(g) \, u_g u_y 
\end{equation}
for all $g,h \in G$.
The algebra $A_{\sigma, c}$ is an $H$-comodule algebra with coaction given by
\begin{eqnarray}
\delta(u_y) = 1 \otimes y + u_y \otimes x 
\quad\text{and}\quad
\delta(u_g) = u_g \otimes g \, . \quad(g\in G)
\end{eqnarray}

Bichon proved that any Galois object over~$H$ is isomorphic to one of the form~$A_{\sigma, c}$. Moreover,
$A_{\sigma, c}$ and $A_{\sigma', c'}$ are isomorphic comodule algebras if and only
$c= c'$ and the two-cocycles $\sigma$ and $\sigma'$ represent the same element of the cohomology group~$H^2(G,k^{\times})$.
In other words, the map $(\sigma,c) \mapsto A_{\sigma, c}$ induces a bijection between 
$H^2(G,k^{\times}) \times k $ and the set of isomorphism classes of Galois objects over~$H$
(see\,\cite[Th.\,2.1]{Bi}).

Let now introduce the same $X$-symbols $E = X_1^1$, $X= X_1^x$, $Y= X_1^y$
as in \S\,\ref{Taft-PI}. Since $H_{n^2}$ is a Hopf subalgebra of~$H$,
we can reproduce the same computation as in the proof of Proposition\,\ref{mainth}.
It allows us to conclude that
\begin{equation}\label{PI-monomial}
(YX - qXY)^n - (1-q)^n X^nY^n +  (1-q)^n c \,  E^nX^n 
\end{equation}
is a polynomial $H$-identity for the Galois object~$A_{\sigma,c}$.

\begin{theorem}\label{mainth2}
Suppose that $k$ is algebraically closed.
If 
\[\Id_{H}(A_{\sigma,c}) = \Id_{H}(A_{\sigma',c'})\, ,
\] 
then $A_{\sigma,c}$ and $A_{\sigma',c'}$ are isomorphic comodule algebras.
\end{theorem}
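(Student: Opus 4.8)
The plan is to imitate the argument of Theorem~\ref{maincor}. Since $k$ is algebraically closed, the cohomology group $H^2(G,k^\times)$ is a finite abelian group, but more importantly any class in $H^2(G,k^\times)$ still has to be detected. The key observation is that the polynomial identity~\eqref{PI-monomial} isolates the parameter~$c$: subtracting the identities for $A_{\sigma,c}$ and $A_{\sigma',c'}$ and applying the universal detecting map $\mu_\alpha$ of Theorem~\ref{detect} kills everything except the term $(c-c')(1-q)^n E^n X^n$, whose image under $\mu_\alpha$ is $(c-c')(1-q)^n t_1^n t_x^n \neq 0$. This forces $c=c'$ exactly as in the Taft case, and it only uses the inclusion $\Id_H(A_{\sigma,c}) \subset \Id_H(A_{\sigma',c'})$ in one direction together with the fact that~\eqref{PI-monomial} genuinely lies in $\Id_H(A_{\sigma',c'})$.

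Having reduced to $c=c'$, the remaining task is to show that the two-cocycles $\sigma$ and $\sigma'$ represent the same class in $H^2(G,k^\times)$; by Bichon's classification this is exactly what is needed to conclude $A_{\sigma,c}\cong A_{\sigma',c}$. For this I would first note that each $A_{\sigma,c}$ contains the twisted group algebra $k_\sigma G$ as the subalgebra generated by the $u_g$, and that $k_\sigma G$ is precisely the coinvariants-free ``$kG$-part'' visible through the $X$-symbols $X_1^g$, $g\in G$. The point is that an $H$-comodule algebra identity restricted to the indeterminates $X_i^g$ (with $g$ ranging over $G$) is nothing but a $kG$-graded polynomial identity, because $kG$ is a Hopf subalgebra of~$H$ and the coaction on such $X_i^g$ is $X_i^g\otimes g$. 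So $\Id_H(A_{\sigma,c})$ contains, as a ``sub-$T$-ideal,'' the $G$-graded identities of $k_\sigma G$.

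Now I would invoke the Koshlukov--Zaicev / Aljadeff--Haile theorem (cited in the Introduction): a finite-dimensional $G$-graded $G$-simple algebra over an algebraically closed field of characteristic zero is determined up to $G$-graded isomorphism by its $G$-graded polynomial identities, and the twisted group algebras $k_\sigma G$ are exactly the fine gradings, with $k_\sigma G\cong k_{\sigma'}G$ as $G$-graded algebras iff $[\sigma]=[\sigma']$ in $H^2(G,k^\times)$. Thus the equality $\Id_H(A_{\sigma,c})=\Id_H(A_{\sigma',c'})$ yields equality of the $G$-graded identities of $k_\sigma G$ and $k_{\sigma'}G$, hence $[\sigma]=[\sigma']$, hence $A_{\sigma,c}\cong A_{\sigma',c'}$ by Bichon's theorem.

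The main obstacle is making precise the claim that the $G$-graded identities of the subalgebra $k_\sigma G$ are recoverable from $\Id_H(A_{\sigma,c})$: one must check that every $H$-comodule algebra map $T\to A_{\sigma,c}$ restricts, on the subalgebra of $T$ generated by the $X_i^g$ with $g\in G$, to a $G$-graded algebra map into $k_\sigma G$, and conversely that every $G$-graded map $k\langle\text{graded variables}\rangle\to k_\sigma G$ extends to an $H$-comodule algebra map $T\to A_{\sigma,c}$ (sending the remaining $X_i^y$-type symbols to $0$, say, which is legitimate because $\delta(X_i^{y})=1\otimes y + X_i^x\otimes x$ is compatible with such a choice only after a small check). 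Equivalently, and perhaps more cleanly, one can argue directly with the detecting map: $\mu_\alpha$ restricted to the $G$-part lands in $S\otimes k_\sigma G$ and its kernel there is, by the same infinite-field density argument as in Theorem~\ref{detect}, the $T$-ideal of $G$-graded identities of $k_\sigma G$. Once this bookkeeping is in place the rest is a citation.
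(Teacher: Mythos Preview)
Your proposal is correct and takes essentially the same route as the paper: first isolate $c=c'$ via the identity~\eqref{PI-monomial}, then restrict to the $kG$-part of~$T$ and invoke the graded-identity literature to conclude that $\sigma$ and~$\sigma'$ are cohomologous. The paper carries out precisely your option~(b), formalized as a commutative diagram in which the injectivity of $S(t_{kG})\otimes k_\sigma G\hookrightarrow S(t_H)\otimes A_{\sigma,c}$ yields $\Id_{kG}(k_\sigma G)=T(X_{kG})\cap\Id_H(A_{\sigma,c})$, and then cites~\cite{AHN} (rather than~\cite{KZ} or~\cite{AH}) for the final step; your option~(a) does not work as written---colinearity of $\mu$ forces $\mu(X_i^y)=\mu(X_i^1)\,u_y+(\text{scalar})\,u_x$, so $\mu(X_i^y)=0$ would kill $\mu(X_i^1)$ too---but you already flagged~(b) as the cleaner route, and it is.
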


\begin{proof}
Proceeding as in the proof of Theorem\,\ref{maincor}, we deduce $c=c'$
from\,\eqref{PI-monomial}.
It remains to check that $\sigma$ and $\sigma'$ represent the same element of~$H^2(G,k^{\times})$.

Consider the following diagram: 
\begin{equation*}
\begin{matrix}
0 \to & \Id_{kG}(k^{\sigma}G) & \overset{}{\longrightarrow} & T(X_{kG}) & \overset{\mu}{\longrightarrow} 
& S(t_{kG}) \otimes k^{\sigma}G \\
&\iota\downarrow& &{\iota_T}{\downarrow}& & \iota_S \downarrow \\
0 \to & \Id_{H}(A_{\sigma,c}) & \overset{}{\longrightarrow} & T(X_H) & \overset{\mu}{\longrightarrow} 
& S(t_H) \otimes A_{\sigma,c} 
\end{matrix}
\end{equation*}
Here $k^{\sigma}G$ is the twisted group algebra generated by the symbols $u_g$ ($g\in G$)
and Relations\,\eqref{rel-kG}; it is the subalgebra of~$A_{\sigma,c}$ generated by the elements~$u_g$,
where $g$ runs over all elements of~$G$.

The vertical map $\iota_T: T(X_{kG}) \to T(X_H)$ is induced by the natural inclusion $kG \to H$; it is injective.
The map $\iota_S: S(t_{kG}) \otimes k^{\sigma}G \to S(t_H) \otimes A_{\sigma,c}$ is induced by the previous natural inclusion 
and the comodule algebra inclusion $k^{\sigma}G \subset A_{\sigma,c}$; 
it sends a typical generator $t_i^g u_{g'}$ of~$S(t_{kG}) \otimes k^{\sigma}G$ to the same expression
viewed as an element of~$S(t_H) \otimes A_{\sigma,c}$.
The maps~$\mu$ are the corresponding universal comodule maps; 
the horizontal sequences are exact in view of Theorem\,\ref{detect}.
The diagram is obviously commutative. 
Hence, the restriction $\iota$ of~$\iota_T$ to~$\Id_{kG}(k^{\sigma}G)$ send the latter to $\Id_{H}(A_{\sigma,c})$
and is injective. Since $\iota_S$ is injective, we have
\begin{equation*}
\Id_{kG}(k^{\sigma}G) = T(X_{kG}) \cap \Id_{H}(A_{\sigma,c}) \, .
\end{equation*}
Consequently, the equality of the theorem implies the equality 
\begin{equation*}
\Id_{kG}(k^{\sigma}G) = \Id_{kG}(k^{\sigma'}G)
\end{equation*}
of $T$-ideals of graded identities. 
We now appeal to\,\cite[Sect.\,1]{AHN}, from which it follows
that $\sigma$ and $\sigma'$ are cohomologous two-cocycles. 
\end{proof}

\section{The Hopf algebras~$E(n)$}\label{sec-En}

We now deal with the Hopf algebras~$E(n)$ considered in\,\cite{BDG, BC, Ne, PvO}.
When $k$ is an algebraically closed field of characteristic zero, 
$E(n)$~Êis up to isomorphism the only $2^{n+1}$-dimensional pointed Hopf algebra with coradical~$k\ZZ/2$.

\subsection{Galois objects over~$E(n)$}\label{ssec-En}

Fix an integer $n\geq 1$. Assume that the field~$k$ is of characteristic~$\neq 2$.
The algebra~$E(n)$ is generated by elements $x$, $y_1, \ldots, y_n$
subject to the relations
\begin{equation*}
x^2=1 \, , \quad y_i^2=0 \, , \quad y_ix+xy_i=0 \, ,\quad y_iy_j+y_jy_i=0 
\end{equation*}
for all $i,j= 1, \ldots, n$.
As a vector space, $E(n)$ is of dimension~$2^{n+1}$.

The algebra~$E(n)$ is a Hopf algebra with coproduct~$\Delta$, counit~$\eps$ and antipode~$S$ 
determined for all $i= 1, \ldots, n$ by
\begin{eqnarray}\label{En-coproduct}
\Delta(x) = x \otimes x\, ,& \quad & \Delta(y_i) = 1 \otimes y_i + y_i \otimes x\, ,\\
\eps(x) = 1\, ,& \quad & \eps(y_i) = 0 \, ,\\
S(x) = x\, ,& \quad & S(y_i) = - y_ix \, .
\end{eqnarray}
When $n=1$, the Hopf algebra $E(n)$ coincides with the Sweedler algebra.

The Galois objects over~$E(n)$ can be described as follows.
Let $a\neq k^{\times}$, $\underline{c} = (c_1, \ldots, c_n) \in k^n$, and $\underline{d} = (d_{i,j})_{i,j = 1, \dots, n}$
be a symmetric matrix with entries in~$k$.
To this collection of scalars we associate
the algebra $A(a,\underline{c}, \underline{d})$ generated by the symbols $u$, $u_1, \ldots, u_n$
and the relations
\begin{equation}
u^2 = a\, , \quad u_i^2 = c_i \, , \quad  
uu_i + u_i u = 0\, , \quad u_iu_j + u_j u_i = d_{i,j}
\end{equation}
for all $i,j = 1, \dots, n$. It is a comodule algebra with coaction 
$\delta: A(a,\underline{c}, \underline{d}) \to A(a,\underline{c}, \underline{d}) \otimes E(n)$
given for all $i = 1, \ldots, n$ by
\begin{equation}\label{coactionEn}
\delta(u) = u \otimes x \, ,\quad \delta(u_i) = 1 \otimes y_i + u_i \otimes x \, .
\end{equation}

It follows from\,\cite[Sect.\,4]{PvO} completed by\,\cite[Sect.\,2]{Ne} 
that any Galois object over~$E(n)$ is isomorphic to a comodule algebra
of the form~$A(a,\underline{c}, \underline{d})$. Moreover, 
$A(a,\underline{c}, \underline{d})$ and $A(a',\underline{c'}, \underline{d'})$ are isomorphic Galois objects
if and only if $\underline{c} = \underline{c'}$, $\underline{d} = \underline{d'}$ and $a' = v^2a$ for some
nonzero scalar~$v$. 

Consequently, if $k$ is algebraically closed, then $A(a,\underline{c}, \underline{d})$ is isomorphic to 
$A(1,\underline{c}, \underline{d})$ for any $a\neq 0$, 
and the Galois objects $A(1,\underline{c}, \underline{d})$ and $A(1,\underline{c'}, \underline{d'})$ are isomorphic
if and only if $\underline{c} = \underline{c'}$ and $\underline{d} = \underline{d'}$.

\subsection{Two families of polynomial identities}\label{ssec-PI-En}

Let us now compute the universal comodule algebra map
$\mu_{\alpha} : T \to S \otimes A(a,\underline{c}, \underline{d})$ 
corresponding to the comodule algebra~$A(a,\underline{c}, \underline{d})$.

We set $E = X_1^1$, $X = X_1^x$, $Y_i= X_1^{y_i}$ for the $X$-symbols, and 
$t_0 = t_1^1$, $t_x= t_1^x$, $t_i= t_1^{y_i}$ for the corresponding $t$-symbols. 
In view of \eqref{mu} and \eqref{coactionEn},
we have
\begin{equation}\label{mu-En}
\mu_{\alpha}(E) = t_0 \, , \quad \mu_{\alpha}(X) = t_x \, u \, , \quad \mu_{\alpha}(Y_i) = t_0\, u_i + t_i \, u 
\end{equation}
for all $i= 1, \ldots, n$.

\begin{prop}\label{prop3}
The degree~$4$ polynomials
\begin{equation*}
(XY_i+Y_iX)^2 - 4 X^2 Y_i^2 + 4c_i\, E^2 X^2 \qquad (1\leq i \leq n)
\end{equation*}
and
\begin{equation*}
2 (Y_iY_j + Y_jY_i)X^2 - (XY_i+Y_iX)(XY_j+Y_jX) -  2d_{i,j}\, E^2 X^2 \quad (1\leq i \leq j \leq n)
\end{equation*}
are polynomial $E(n)$-identities for the Galois object~$A(a,\underline{c}, \underline{d})$.
\end{prop}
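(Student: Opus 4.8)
The plan is to verify directly that $\mu_\alpha$ annihilates each of the two families of polynomials, invoking Theorem~\ref{detect} to conclude that they are polynomial $E(n)$-identities for $A(a,\underline{c},\underline{d})$. Everything reduces to a computation in the commutative polynomial ring $S$ tensored with the explicit algebra $A(a,\underline{c},\underline{d})$, using the formulas~\eqref{mu-En} for $\mu_\alpha(E)$, $\mu_\alpha(X)$, $\mu_\alpha(Y_i)$ together with the defining relations $u^2=a$, $u_i^2=c_i$, $uu_i+u_iu=0$, $u_iu_j+u_ju_i=d_{i,j}$.

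First I would compute the image of the basic anticommutators. Since the $t$-symbols are central, $\mu_\alpha(XY_i+Y_iX) = t_x u(t_0 u_i + t_i u) + (t_0 u_i + t_i u)t_x u = t_0 t_x(uu_i+u_iu) + 2t_i t_x u^2 = 2a\,t_i t_x$, where the cross term vanishes by $uu_i+u_iu=0$ and $u^2=a$ is used in the last term. Squaring gives $\mu_\alpha\bigl((XY_i+Y_iX)^2\bigr) = 4a^2 t_i^2 t_x^2$. On the other hand $\mu_\alpha(X^2)=t_x^2 u^2 = a t_x^2$ and, using $u^2=a$ and $u_i^2=c_i$ and the anticommutation $uu_i+u_iu=0$, one finds $\mu_\alpha(Y_i^2) = (t_0 u_i+t_iu)^2 = t_0^2 u_i^2 + t_i^2 u^2 + t_0 t_i(u_iu+uu_i) = c_i t_0^2 + a t_i^2$. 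Also $\mu_\alpha(E^2)=t_0^2$. Substituting into the first polynomial: $4a^2 t_i^2 t_x^2 - 4(a t_x^2)(c_i t_0^2 + a t_i^2) + 4c_i t_0^2 (a t_x^2) = 4a^2 t_i^2 t_x^2 - 4a c_i t_0^2 t_x^2 - 4a^2 t_i^2 t_x^2 + 4 a c_i t_0^2 t_x^2 = 0$, as required.

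For the second family the computation is the mixed analogue. One has $\mu_\alpha(Y_iY_j+Y_jY_i) = (t_0u_i+t_iu)(t_0u_j+t_ju) + (t_0u_j+t_ju)(t_0u_i+t_iu)$; expanding and collecting, the $u^2$ terms contribute $2t_it_j\,u^2 = 2a t_it_j$, the mixed $uu_i$-type terms cancel by $uu_k+u_ku=0$, and the $u_iu_j$-type terms give $t_0^2(u_iu_j+u_ju_i) = d_{i,j}t_0^2$; thus $\mu_\alpha(Y_iY_j+Y_jY_i) = d_{i,j}t_0^2 + 2a t_it_j$. Multiplying by $\mu_\alpha(X^2)=a t_x^2$ and doubling yields $2a t_x^2(d_{i,j}t_0^2 + 2a t_it_j)$. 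Meanwhile $\mu_\alpha(XY_i+Y_iX)\mu_\alpha(XY_j+Y_jX) = (2a t_it_x)(2a t_jt_x) = 4a^2 t_it_jt_x^2$, and $2d_{i,j}\mu_\alpha(E^2X^2) = 2d_{i,j}\,a t_0^2 t_x^2$. Hence the image of the second polynomial is $2a d_{i,j}t_0^2t_x^2 + 4a^2 t_it_jt_x^2 - 4a^2 t_it_jt_x^2 - 2a d_{i,j}t_0^2t_x^2 = 0$.

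Since $\mu_\alpha$ kills every polynomial in both families, Theorem~\ref{detect} gives that each lies in $I_{E(n)}(A(a,\underline{c},\underline{d})) = \Id_{E(n)}(A(a,\underline{c},\underline{d}))$, which is the claim. There is no real obstacle here: the argument is a routine check once~\eqref{mu-En} and the relations of $A(a,\underline{c},\underline{d})$ are in hand, and the only point requiring a little care is bookkeeping the cancellation of the odd cross-terms, which is governed uniformly by the anticommutation relations $uu_i+u_iu=0$. The genuinely new content of the paper lies not in this proposition but in the subsequent step, where one shows that these identities separate the isomorphism classes $A(1,\underline{c},\underline{d})$; that, however, is outside the scope of the present statement.
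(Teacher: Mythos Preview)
Your proof is correct and follows exactly the same approach as the paper: compute the images $\mu_\alpha(E^2)$, $\mu_\alpha(X^2)$, $\mu_\alpha(Y_i^2)$, $\mu_\alpha(XY_i+Y_iX)$, $\mu_\alpha(Y_iY_j+Y_jY_i)$ using~\eqref{mu-En} and the defining relations of $A(a,\underline{c},\underline{d})$, then verify that the given polynomials lie in $\ker\mu_\alpha$ and invoke Theorem~\ref{detect}. You have merely supplied the intermediate arithmetic that the paper leaves to the reader.
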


\begin{proof}
In view of\,\eqref{mu-En} and of the defining relations of $A(a,\underline{c}, \underline{d})$, we obtain
\begin{equation*}
\mu_{\alpha}(E^2) =  t_0^2 \, , \quad
\mu_{\alpha}(X^2) =  at_x^2 \, , \quad
\mu_{\alpha}(Y_i^2) = a t_i^2 + c_i t_0^2 \, ,
\end{equation*}
\begin{equation*}
\mu_{\alpha}(XY_i+Y_iX) = 2 a t_x t_i \, , \quad
\mu_{\alpha}((Y_iY_j + Y_jY_i) = 2a t_i t_j + d_{i,j} t_0^2 \, .
\end{equation*}
From these equalities, it is easy to check that the above polynomials belong to the kernel of~$\mu_{\alpha}$,
hence are polynomial $E(n)$-identities.
\end{proof}

\begin{theorem}\label{mainth3}
Suppose that $k$ is algebraically closed.
If 
\[\Id_{E(n)}(A(a,\underline{c}, \underline{d})) = \Id_{E(n)}(A(a',\underline{c'}, \underline{d'}))\, ,
\] 
then $A(a,\underline{c}, \underline{d})$ and $A(a',\underline{c'}, \underline{d'})$ are isomorphic comodule algebras.
\end{theorem}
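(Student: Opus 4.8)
The plan is to mimic the proofs of Theorems~\ref{maincor} and~\ref{mainth2}. As in those proofs, we may first use the classification of Galois objects over~$E(n)$ recalled in \S\,\ref{ssec-En}: since $k$ is algebraically closed, $A(a,\underline{c},\underline{d}) \cong A(1,\underline{c},\underline{d})$ and $A(a',\underline{c'},\underline{d'}) \cong A(1,\underline{c'},\underline{d'})$, so by the isomorphism-invariance of the $T$-ideal noted after Proposition~2.5 we may assume $a = a' = 1$. It then suffices to deduce $\underline{c} = \underline{c'}$ and $\underline{d} = \underline{d'}$ from the equality of $T$-ideals, because by the classification these equalities give the desired comodule algebra isomorphism.

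To extract the scalars, I would proceed one family of identities at a time. Fix $i$ and let $P_i^{c}$ denote the degree~$4$ polynomial $(XY_i+Y_iX)^2 - 4X^2Y_i^2 + 4c_i E^2 X^2$, which by Proposition~\ref{prop3} lies in $\Id_{E(n)}(A(1,\underline{c},\underline{d}))$; likewise $P_i^{c'} \in \Id_{E(n)}(A(1,\underline{c'},\underline{d'}))$. By the assumed equality of $T$-ideals, both $P_i^{c}$ and $P_i^{c'}$ are polynomial $E(n)$-identities for $A(1,\underline{c},\underline{d})$, hence so is their difference $P_i^{c} - P_i^{c'} = 4(c_i - c_i')\, E^2 X^2$. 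Applying the universal map $\mu_\alpha$ of Theorem~\ref{detect} and using~\eqref{mu-En} together with $\mu_\alpha(E^2) = t_0^2$, $\mu_\alpha(X^2) = t_x^2$ (recall $a = 1$), we get $0 = \mu_\alpha(P_i^{c} - P_i^{c'}) = 4(c_i - c_i')\, t_0^2 t_x^2$ in $S \otimes A(1,\underline{c},\underline{d})$. Since $t_0^2 t_x^2 \neq 0$ in the polynomial algebra~$S$ and the characteristic is~$\neq 2$, this forces $c_i = c_i'$. Running this over all $i = 1, \ldots, n$ yields $\underline{c} = \underline{c'}$.

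With $\underline{c} = \underline{c'}$ in hand, I would repeat the argument with the second family: for $i \leq j$ let $Q_{i,j}^{d}$ be the polynomial $2(Y_iY_j + Y_jY_i)X^2 - (XY_i+Y_iX)(XY_j+Y_jX) - 2d_{i,j} E^2 X^2$ from Proposition~\ref{prop3}. Again both $Q_{i,j}^{d}$ and $Q_{i,j}^{d'}$ are $E(n)$-identities for $A(1,\underline{c},\underline{d})$ by the equality of $T$-ideals, so the difference $Q_{i,j}^{d} - Q_{i,j}^{d'} = -2(d_{i,j} - d_{i,j}')\, E^2 X^2$ is too; applying $\mu_\alpha$ gives $0 = -2(d_{i,j} - d_{i,j}')\, t_0^2 t_x^2$, whence $d_{i,j} = d_{i,j}'$ for all $i \leq j$, and by symmetry $\underline{d} = \underline{d'}$. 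Combining $a = a'$ (after normalization), $\underline{c} = \underline{c'}$ and $\underline{d} = \underline{d'}$ with the isomorphism criterion from \S\,\ref{ssec-En} concludes the proof.

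There is essentially no serious obstacle here; the whole argument is formal once Proposition~\ref{prop3} and Theorem~\ref{detect} are available, the only mild points to be careful about being the reduction to $a = a' = 1$ (which rests on the invariance $\Id_{E(n)}(A) = \Id_{E(n)}(A')$ for isomorphic comodule algebras) and the observation that the monomial $t_0^2 t_x^2$ is a nonzero element of~$S$, so that each scalar can be read off coefficient by coefficient. As remarked after Theorem~\ref{maincor}, the same proof in fact only requires the inclusion $\Id_{E(n)}(A(a',\underline{c'},\underline{d'})) \subset \Id_{E(n)}(A(a,\underline{c},\underline{d}))$, and it shows that the finite set of identities in Proposition~\ref{prop3} determines the full $T$-ideal $\Id_{E(n)}(A(a,\underline{c},\underline{d}))$ over an algebraically closed field.
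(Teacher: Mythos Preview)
Your proposal is correct and follows essentially the same approach as the paper's own proof: reduce to $a=a'=1$ via algebraic closure, then use the identities of Proposition~\ref{prop3} and Theorem~\ref{detect} to read off each scalar $c_i$ and $d_{i,j}$ from the coefficient of~$E^2X^2$, whose image $t_0^2 t_x^2$ under~$\mu_\alpha$ is nonzero. The paper states this argument more tersely, but the content is identical; your remark that having $\underline{c}=\underline{c'}$ ``in hand'' before extracting~$d_{i,j}$ is not actually needed---the two families of identities can be treated independently.
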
 

\begin{proof}
We proceed as in the proof of Theorem\,\ref{maincor} by using the identities of Proposition\,\ref{prop3}. Note that there is 
such an identity for each scalar used to parametrize the Galois objects, and each such scalar appears
as the coefficient of the monomial~$E^2 X^2$; the latter cannot be an identity since its image under the 
universal comodule map, being equal to~$a t_0^2 t_x^2$, does not vanish.
\end{proof}

We finally note that the set of $n(n+3)/2$  polynomial $E(n)$-identities of Proposition\,\ref{prop3}
determines the $T$-ideal~$\Id_{E(n)}(A(a,\underline{c}, \underline{d}))$.

\section*{Acknowledgment}
My warmest thanks go to Eli Aljadeff who initiated me to the theory of polynomial algebras (classical and graded)
and drew my attention to the question addressed here.

\end{document}